\theoremstyle{plain}
\newtheorem{lemma}{Lemma}
\newtheorem{cor}{Corollary}
\newtheorem*{thm*}{Theorem}
\theoremstyle{remark}
\newtheorem*{rmk}{Remark}
\newtheorem*{rmks}{Remarks}
\newcommand{\bil}[2]{\langle #1, #2 \rangle}
\newcommand{\Z}{{\mathbb{Z}}}
\newcommand{\N}{{\mathbb{N}}}
\newcommand{\Ext}{\operatorname{Ext}}
\newcommand{\Hom}{\operatorname{Hom}}
\newcommand{\M}{\mathsf{M}}
\newcommand{\GL}{\mathsf{GL}}
\newcommand{\ind}{\operatorname{ind}}
\renewcommand{\le}{\leqslant}
\renewcommand{\ge}{\geqslant}
\newcommand{\B}{\mathbf{B}}
\newcommand{\HH}{\mathbf{H}}
\newcommand{\ch}{\operatorname{ch}}
\newcommand{\mull}{\operatorname{Mull}}
\newcommand{\sgn}{\operatorname{sgn}}
\begin{document}
\title{Hook modules for general linear groups}
\author{Stephen~Doty}\thanks{The first author was supported by the
  inaugural Yip Fellowship at Magdalene College, Cambridge}
\address{Mathematics and Statistics, Loyola University Chicago,
  Chicago, IL 60626, USA} 

\author{Stuart~Martin} \address{Magdalene
  College, Cambridge, CB3 0AG, England, UK}
\date{24 February 2008}
\begin{abstract}
For an arbitrary infinite field $k$ of characteristic $p > 0$, we
describe the structure of a block of the algebraic monoid $\M_n(k)$
(all $n \times n$ matrices over $k$), or, equivalently, a block of the
Schur algebra $S(n,p)$, whose simple modules are indexed by $p$-hook
partitions. The result is known; we give an elementary and
self-contained proof, based only on a result of Peel and Donkin's
description of the blocks of Schur algebras.  The result leads to a
character formula for certain simple $\GL_n(k)$-modules, valid for all
$n$ and all $p$. This character formula is a special case of one found
by Brundan, Kleshchev, and Suprunenko and, independently, by Mathieu
and Papadopoulo. 
\end{abstract}

\maketitle

\section*{Introduction}\noindent
We describe the structure of the family of Weyl modules labeled by
$p$-hooks, for $\GL_n(k)$ where $k$ is an infinite field of
characteristic $p>0$. The main result, given in the Theorem of Section
\ref{results}, leads to a description of the module structure of the
family of projective-injective tilting modules labeled by $p$-hooks,
and determines completely the corresponding block of $\M_n(k)$.  The
result was previously obtained in \cite[\S5.7]{DEN} by a different
argument; see also \cite{Hemmer}.  Another consequence of the Theorem
is a character formula for simple $\GL_n(k)$-modules labeled by
$p$-hooks; this character formula was previously conjectured by
Jantzen. After the first draft of this paper was written, it was
pointed out to us that the character formula is an easy consequence of
a result of Brundan, Kleshchev, and Suprunenko \cite[Theorem 6.3]{BKS}
describing the character of simple modules labeled by `completely
splittable' weights. The same character formula was obtained
independently by Mathieu and Papadopoulo \cite{MP} by a different
method, and it can be used to give yet another proof of the Theorem of
this note.

The idea behind the proof given here is to compare with the symmetric
group $\Sigma_p$ on $p$ letters using the idempotent `Schur functor'
of \cite[\S6]{Green}, given by $M \to eM$ where $e$ is projection onto
the $(1^p)$ weight space.  The key lemma (Lemma \ref{lem:B}) states
that the $p$-hooks index the simple modules in a block of the Schur
algebra $S(n,p)$, for $n \ge p$. This provides a tight upper bound on
the number of composition factors of a Weyl module labeled by a given
$p$-hook; the corresponding lower bound is provided by Jantzen's sum
formula (see Lemma \ref{lem:A}).

We note that the main result also provides a set of examples where
\[
\Ext^1_{\GL_n(k)}(L(\lambda), L(\mu)) \simeq
\Ext^1_{\Sigma_p}(eL(\lambda), eL(\mu)).
\]
The question of determining pairs $\lambda, \mu$ for which such an
equality holds has been studied in \cite{Fettes}, \cite{SM},
\cite{KN}.

\section{Notation}\label{notation}\noindent 
We mostly follow the notational conventions of \cite[\S1]{Green}.  Fix
an infinite field $k$ of positive characteristic $p$, and set $G
= \GL_n(k)$, the general linear group of invertible $n \times n$
matrices over $k$.  Let $E_{ij}$ ($1 \le i,j \le n$) denote the matrix
units in $\M_n(k)$ (the algebraic monoid of $n \times n$ matrices over
$k$). So $E_{ij}$ is the $n \times n$ matrix with $(i,j)$-entry 1 and
all other entries 0.  The $E_{ij}$ form a $k$-basis of $\M_n(k)$; let
$c_{ij}$ ($1 \le i,j \le n$) be the dual basis of $\M_n(k)^*$. The
coordinate algebra $k[G]$ is generated by the $c_{ij}$ and
$\det(c_{ij})^{-1}$.

Let $T$ be the subgroup of $G$ consisting of all diagonal
matrices.  The restrictions $\varepsilon_i = {c_{ii}}_{|_T}$ ($1 \le i
\le n$) form a basis of the character group $X = X(T) = \Hom(T, G_m)$
(algebraic group homomorphisms to the multiplicative group $G_m$); we
identify elements of $X$ with elements of $\Z^n$ via the
correspondence
\[ \textstyle
  \lambda = \sum_i \lambda_i \varepsilon_i \leftrightarrow (\lambda_1,
  \dots, \lambda_n).
\]
The root system associated to the pair $(G,T)$ is the set $R = \{
\varepsilon_i - \varepsilon_j: 1 \le i \ne j \le n \}$. We choose as
positive roots the system $R^+ = \{ \varepsilon_i - \varepsilon_j: 1
\le i < j \le n \}$; then $S = \{ \varepsilon_i - \varepsilon_{i+1} :
1 \le i \le n-1 \}$ is the corresponding set of simple roots.  Let $B$
be the Borel subgroup of $G$ corresponding with the negative
roots. The set of dominant weights is the set
\[
  X^+ = \{\lambda \in X: \bil{\lambda}{\alpha^\vee} \ge 0, \text{ all
  $\alpha \in S$} \};
\]
in terms of our identification $X \simeq \Z^n$ we have that $\lambda =
(\lambda_1, \dots, \lambda_n) \in X^+$ if an only if 
\[
\lambda_1 \ge \lambda_2 \ge \cdots \ge \lambda_n.
\]
The Weyl group $W$ associated to $(G,T)$ is naturally isomorphic to
the symmetric group $\Sigma_n$ on $n$ letters, via the correspondence
$s_{\alpha} \to (i,j)$ when $\alpha = \varepsilon_i - \varepsilon_j$.
As usual, $W$ acts on $X$ via the rule $s_\alpha \lambda = \lambda -
\bil{\lambda}{\alpha^\vee} \alpha$; we shall need the `dot action'
given by $s_\alpha \cdot \lambda = s_\alpha(\lambda+\rho) - \rho$
where $\rho = \sum_i (n-i)\varepsilon_i$.  (Here $\rho$ is not the
usual `half sum of the positive roots,' which is not always defined in
$X$, but it has the crucial property $\bil{\rho}{\beta^\vee} = 1$ for
all $\beta \in S$.)  We also need to consider elements of the affine
Weyl group $W_p$ generated by all $s_{\alpha,ap}$ ($\alpha\in R$,
$a\in \Z$) where $s_{\alpha,ap} = s_\alpha + ap\alpha$.

Let $k_\lambda$ (for $\lambda \in X$) be the one dimensional
$B$-module such that $T$ acts via the character $\lambda$ and the
unipotent radical of $B$ acts trivially.  For $\lambda \in X^+$ we
have the following rational $G$-modules:
\begin{align*}
\nabla(\lambda) & = \ind_B^G k_\lambda \\
\Delta(\lambda) & = {}^\tau \nabla(\lambda) 
                  \simeq \nabla(-w_0\lambda)^* \text{ (the Weyl module)}\\
T(\lambda) &= \text{indecomposable tilting module of 
                    highest weight } \lambda .
\end{align*}
Here $T(\lambda)$ is the unique indecomposable module of highest
weight $\lambda$ which admits both a $\Delta$ and $\nabla$ filtration
(see \cite{Donkin:tilt}), $\tau$ is the `transpose dual' operator
(\cite[Part II, 2.13]{Jantzen}) and $w_0$ is the longest element of
$W$.

Any finite-dimensional $G$-module $M$ is the direct sum of its weight
spaces: $M = \oplus_{\lambda\in X} M_\lambda$ where $M_\lambda = \{ m
\in M: tm = \lambda(t) m, \text{ all } t\in T\}$. We set
\[
\ch M = \textstyle \sum_{\lambda}\, (\dim M_\lambda)\,e(\lambda) \in
\Z[X]
\]
(the formal character of $M$). Here $\Z[X]$, the group ring of $X$, is
the free $\Z$-module with basis $\{ e(\lambda): \lambda \in X \}$ and
multiplication $e(\lambda)e(\mu) = e(\lambda+\mu)$.

A $\GL_n(k)$-module $M$ is termed {\em polynomial} if it lifts to a
rational $\M_n(k)$-module. It is well known (see
e.g.~\cite[\S1]{Green}) that restriction from $\M_n(k)$ to $\GL_n(k)$
induces an full embedding of the category $\M_n(k)$-mod of rational
left $\M_n(k)$-modules in the category $\GL_n(k)$-mod of rational left
$\GL_n(k)$-modules.  The weights of an object $M \in \M_n(k)$-mod are
of the form $\lambda = (\lambda_1, \dots, \lambda_n) \in \N^n$.  Such
weights are often called polynomial weights. In the language of
combinatorics, polynomial weights are compositions (i.e.\ unordered
partitions) of length $n$ and dominant polynomial weights are
partitions of length at most $n$.  We omit any zero parts when writing
a partition, as usual.

The category of polynomial $\GL_n(k)$-modules is graded by homogeneous
degree; that is we have an equivalence 
\[
\M_n(k)\text{-mod} \simeq \oplus_{r\ge 0}\; S(n,r)\text{-mod}
\]
in the sense that every rational $\M_n(k)$-module may be written as a
direct sum of homogeneous ones of various degrees.  Here $S(n,r)$ is
the Schur algebra in degree $r$; its module category $S(n,r)$-mod is
equivalent with the full subcategory of $\M_n(k)$-mod consisting of
homogeneous modules of degree $r$. We write $\lambda \vdash r$ to
indicate that $\lambda$ is a partition of $r$ (i.e., $\sum \lambda_i =
r$); the set of dominant weights for $S(n,r)$-mod is precisely the set
of all $\lambda \vdash r$ such that length $\le n$.

{\em Assume that $n \ge r$ for the remainder of this section.} Then
the set of $\lambda \vdash r$ with length $\le n$ is the same as the
set of partitions of $r$, and we have a so-called `Schur functor' from
$S(n,r)$-mod to $k\Sigma_r$-mod, where $\Sigma_r$ is the symmetric
group on $r$ letters. This is an exact covariant functor given by the
rule $M \mapsto eM$ where $e \in S(n,r)$ is the idempotent projector
onto the $(1^r)$-weight space.  See \cite[Chapter 6]{Green} or
\cite[Chapter 4]{Martin} for details. In particular we have
\begin{equation} \label{f-effect-std}
e\Delta(\lambda) \simeq S^{\lambda}, \qquad 
e\nabla(\lambda) \simeq S_{\lambda}
\end{equation}
for any partition $\lambda \vdash r$, where $S^\lambda$ (resp.,
$S_{\lambda}$) is the Specht (resp., dual Specht) module indexed by
$\lambda$.  Here $S^\lambda$ may be defined (following
\cite[\S4]{James}) as the submodule of $M^\lambda$ spanned by
polytabloids of type $\lambda$, where $M^\lambda$ is the transitive
permutation module $\ind_{\Sigma_\lambda}^{\Sigma_r} k$ associated
with the Young subgroup $\Sigma_\lambda$ corresponding with $\lambda$,
and $S_\lambda \simeq (S^\lambda)^*$ as $k\Sigma_r$-modules.

A partition $\lambda \vdash r$ is $p$-regular if there does not exist
an $i$ such that $\lambda_{i+1} = \cdots = \lambda_{i+p}$ and column
$p$-regular if its conjugate $\lambda'$ is $p$-regular. Equivalently,
$\lambda$ is column $p$-regular if and only if $\lambda_i -
\lambda_{i+1} < p$ for all $i$. If $\lambda$ is $p$-regular then
$S^\lambda$ has a unique top composition factor $D^\lambda$; similarly
if $\lambda$ is column $p$-regular then $S_\lambda$ has a unique top
composition factor $D_\lambda$. The set
\[
  \{ D^\lambda: \lambda \vdash r, \lambda \text{ $p$-regular} \} \simeq
  \{ D_\lambda: \lambda \vdash r, \lambda \text{ column $p$-regular} \}
\]
gives a complete set of isomorphism classes of simple
$k\Sigma_r$-modules. Since we have an isomorphism $S^{\lambda'} \simeq
S_{\lambda} \otimes \sgn$ it follows that the two labellings
$\{D^\lambda\}$, $\{D_\lambda\}$ are related by
\begin{equation} \label{sgn-on-D}
D^{\lambda'} \simeq D_\lambda \otimes \sgn
\end{equation}
for any column $p$ regular $\lambda \vdash r$.  We also have
\begin{equation}\label{f-effect-L}
  eL(\lambda) \simeq D_\lambda
\end{equation}
for all column $p$-regular partitions $\lambda \vdash r$. Finally,
since tensoring by the one dimensional sign representation must take
simples to simples, we have $D^\lambda \otimes \sgn \simeq
D^{\mull(\lambda)}$ for all $p$-regular $\lambda \vdash r$, where
$\mull(\lambda)$ is given by a combinatorial procedure described in
\cite{Mull} and proved in \cite{Ford-Kleshchev}.

\section{Results}\label{results}\noindent 
Let $\lambda^i = (p-i, 1^i) = (p-i)\varepsilon_1 + \sum_{j=2}^{i+1}
\varepsilon_j$ for $0 \le i \le p-1$. These are the $p$-hook
partitions.

\begin{lemma} \label{lem:A}
  For $0\le i < \min(n-1,p-1)$ the Weyl module $\Delta(\lambda^i)$ has
  at least two composition factors, of highest weight $\lambda^i$ and
  $\lambda^{i+1}$.
\end{lemma}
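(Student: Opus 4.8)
The plan is to use Jantzen's sum formula to force a second composition factor. Since $L(\lambda^i)$ is the head of $\Delta(\lambda^i)$, occurring with multiplicity one, the whole task is to produce $L(\lambda^{i+1})$ as a constituent of the radical. Recall the sum formula: there is a filtration $\Delta(\lambda^i)=\Delta^0\supseteq\Delta^1\supseteq\cdots$ with $\Delta^0/\Delta^1\cong L(\lambda^i)$ and
\[
\sum_{j\ge1}\ch\Delta^j=\sum_{\alpha\in R^+}\ \sum_{0<mp<\bil{\lambda^i+\rho}{\alpha^\vee}}\nu_p(mp)\,\chi(s_{\alpha,mp}\cdot\lambda^i),
\]
where $\nu_p$ is the $p$-adic valuation and $\chi(\mu)$ is the Weyl character, so that $\chi(\mu)=\pm\chi(\mu^+)$ for the dominant conjugate $\mu^+$ (and $\chi(\mu)=0$ if $\mu+\rho$ is singular), with $\chi(\nu)=\ch\nabla(\nu)=\ch\Delta(\nu)$ for dominant $\nu$.

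First I would single out the positive root $\alpha=\varepsilon_1-\varepsilon_{i+2}$, which is available since $i+2\le n$ (using $i<n-1$). A direct computation with $\lambda^i+\rho=(p+n-i-1,\,n-1,\dots,n-i,\,n-i-2,\dots,0)$ gives $\bil{\lambda^i+\rho}{\alpha^\vee}=p+1$, so the only admissible multiple is $mp=p$, i.e.\ $m=1$, with $\nu_p(p)=1$. For this reflection one finds $s_{\alpha,p}\cdot\lambda^i=\lambda^i-\alpha=\lambda^{i+1}$ \emph{exactly}; and $\lambda^{i+1}$ is dominant (a genuine hook, using $i<p-1$ so that $p-i-1\ge1$). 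Hence this term contributes precisely $+\chi(\lambda^{i+1})=\ch\Delta(\lambda^{i+1})$ to the sum, and $\ch\Delta(\lambda^{i+1})$ has $L(\lambda^{i+1})$ as a constituent.

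The hard part is to rule out cancellation, i.e.\ to show this $+\chi(\lambda^{i+1})$ is not killed by other terms. I would argue that $\lambda^{i+1}$ is the maximal dominant weight occurring with nonzero coefficient. Every dominant weight appearing lies strictly below $\lambda^i$ in the dominance order (standard: the sum lies in $\Delta^1$, and its constituents are strongly linked to $\lambda^i$), so it suffices to examine the dominant $\nu$ with $\lambda^{i+1}\lhd\nu\lhd\lambda^i$; an elementary check shows the only such partition of $p$ is $\nu_0=(p-i-1,2,1^{i-1})$ (and there is none at all in the boundary cases $i=0$ or $i=p-2$). I would then show $\nu_0$ does not occur: writing each term's weight as a rearrangement of $\lambda^i+\rho$ with two entries $a_s,a_t$ replaced by $a_t+mp,\,a_s-mp$, matching the multiset of $\nu_0+\rho$ forces the root $\varepsilon_1-\varepsilon_2$ and $mp\in\{1,\,p-i-1\}$, neither a positive multiple of $p$. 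The same multiset bookkeeping shows $\alpha=\varepsilon_1-\varepsilon_{i+2}$, $m=1$ is the \emph{unique} term whose dominant conjugate is $\lambda^{i+1}$, so its coefficient is exactly $+1$. Consequently the multiplicity of $L(\lambda^{i+1})$ in $\sum_{j\ge1}\ch\Delta^j$ equals $1$, and $L(\lambda^{i+1})$ is a composition factor of $\Delta^1\subseteq\Delta(\lambda^i)$, distinct from the head $L(\lambda^i)$. I expect the remaining bookkeeping—confirming that the dominant conjugates arising from all other roots and all $m$ are $\trianglelefteq\lambda^{i+1}$—to be the most laborious step, although the two cases above are the only ones that can actually threaten the conclusion.
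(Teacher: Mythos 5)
Your proof is correct --- I checked the two multiset computations it hinges on and they hold --- but it takes a genuinely different route from the paper's, so a comparison is worthwhile. Both arguments start from Jantzen's sum formula; the difference is how potential cancellation among its terms is handled. The paper notes that $\lambda^i-\lambda^{i+1}=\varepsilon_1-\varepsilon_{i+2}$ is a sum of consecutive simple roots, so one may pass to the corresponding Levi subgroup (a general linear group of small rank), using the standard fact that the relevant composition multiplicities of Weyl modules are unchanged under this reduction when $\lambda-\mu$ lies in the span of the Levi's simple roots; inside the Levi the sum formula for $\lambda^i$ has exactly \emph{one} term, namely $\chi(\lambda^{i+1})$, so there is no cancellation to rule out and the lemma is immediate. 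You instead stay in $\GL_n$, where the formula has many terms, and defeat cancellation by hand: the term $\alpha=\varepsilon_1-\varepsilon_{i+2}$, $m=1$ contributes $+\chi(\lambda^{i+1})$; the unique dominant weight strictly between $\lambda^{i+1}$ and $\lambda^i$ in dominance order is $\nu_0=(p-i-1,2,1^{i-1})$ (when it exists); producing $\nu_0$ from a term would require $mp\in\{1,p-i-1\}$, neither a positive multiple of $p$; and your pair is the unique one whose dominant conjugate is $\lambda^{i+1}$, so its coefficient is exactly $+1$. What each route buys: the paper's Levi reduction eliminates essentially all computation but leans on an extra (standard) theorem about restriction to Levi subgroups; yours is self-contained given the sum formula for $\GL_n$, at the cost of the dominance-order bookkeeping. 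One simplification: your closing worry about confirming that all remaining dominant conjugates are dominated by $\lambda^{i+1}$ is unnecessary. Since $[\chi(\nu):L(\lambda^{i+1})]\neq 0$ forces $\nu\trianglerighteq\lambda^{i+1}$, and every dominant weight occurring in the sum is strictly dominated by $\lambda^i$, only $\lambda^{i+1}$ itself and $\nu_0$ can affect the multiplicity of $L(\lambda^{i+1})$; dominant conjugates incomparable to (or below) $\lambda^{i+1}$ are harmless. So the two checks you carried out already complete the proof.
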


\begin{proof}
(We are grateful to Jens Jantzen for sending us this argument).  It is
enough to show that $L(\lambda^{i+1})$ is a composition factor of
$\Delta(\lambda^i)$ for $i < \min(n-1,p-1)$.  Since $\lambda^i -
\lambda^{i+1}$ is equal to the sum of the first $i$ simple roots, one
can reduce to the root system of type $A_i$. In that case Jantzen's
sum formula \cite[Part II, 8.19]{Jantzen} has just one term, namely
the Weyl character of $\lambda^{i+1}$. The result follows.
\end{proof}

Let $r$ be an arbitrary natural number. Given a partition $\lambda
\vdash r$ of length $\le n$ let us denote by $d(\lambda)$ the maximum
of all $d\ge 0$ such that $\lambda_i - \lambda_{i+1} \equiv -1$ modulo
$p^d$ for all $1 \le i < n$.  We need S.~Donkin's result from
\cite{Donkin:SA4}, which states that for partitions $\lambda, \mu
\vdash r$ of length $\le n$, the corresponding simple modules
$L(\lambda)$, $L(\mu)$ lie in the same block for $S(n,r)$ if and only
if both conditions (B1) and (B2) below hold:
\begin{enumerate}
\item[(B1)] $d(\lambda) = d(\mu)$ (say $d = d(\lambda) = d(\mu)$); 

\item[(B2)] there exists $w \in W$ such that $\lambda_i - i \equiv
\mu_{w(i)} - w(i)$ (mod $p^{d+1}$) for all $1 \le i \le n$.
\end{enumerate}
Noting that $\lambda$ and $\mu$ will satisfy (B2) if and only if
$\lambda+(n^n)$ and $\mu+(n^n)$ also satisfy (B2), we see that (B2) is
equivalent to the condition
\begin{enumerate}
\item[(B$2'$)]  there exists $w \in W$ such that $\lambda + \rho \equiv
w(\mu+\rho)$ (mod $p^{d+1}$).
\end{enumerate}
Here, for $n$-part compositions $\lambda' = (\lambda'_1, \dots,
\lambda'_n)$, $\lambda'' = (\lambda''_1, \dots, \lambda''_n)$ we
declare that $\lambda' \equiv \lambda''$ (mod $N$) if and only if
$\lambda'_i \equiv \lambda''_i$ (mod $N$) for all $1 \le i \le n$.

\begin{lemma} \label{lem:B}
  Assume that $n \ge p$. We identify a block with the set of highest
  weights labeling its simple modules. With that identification, the
  block of the Schur algebra $S(n,p)$ containing the one row partition
  $\lambda^0 = (p)$ consists of all the $p$-hook partitions, and only
  those partitions.
\end{lemma}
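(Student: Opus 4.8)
The plan is to apply Donkin's criterion (B1)--(B2$'$) directly, with $r = p$ and reference weight $\lambda^0 = (p)$. Since $n \ge p \ge 2$, every partition of $p$ has length $\le n$, so the block in question sits inside the set of all partitions of $p$. First I would record that $d(\lambda^0) = 0$: the partition $(p,0,\dots,0)$ has the consecutive difference $\lambda^0_1 - \lambda^0_2 = p \equiv 0 \pmod p$, which is not $\equiv -1$, so no $d \ge 1$ is possible. Consequently, for any $\mu \vdash p$ the criterion reduces to $d(\mu) = 0$ (this is (B1)) together with the existence of $w \in W$ with $\mu + \rho \equiv w(\lambda^0 + \rho) \pmod p$ (this is (B2$'$) with $d+1 = 1$). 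Because $W \simeq \Sigma_n$ acts on $X \simeq \Z^n$ by permuting coordinates, the second condition is precisely the assertion that $\mu + \rho$ and $\lambda^0 + \rho$ carry the same multiset of residues modulo $p$. I would then compute this for $\lambda^0 + \rho = (p+n-1,\, n-2,\, n-3,\, \dots,\, 1,\, 0)$: reducing the leading entry $p+n-1$ to $n-1$ recovers exactly the integers $\{0,1,\dots,n-1\}$, so the residues are those of $\rho$ itself.

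For the forward inclusion I would write out $\lambda^i + \rho$ for each hook $\lambda^i = (p-i,1^i)$. Its entries are $n+p-1-i$ in position $1$, the block $n-1, n-2, \dots, n-i$ in positions $2,\dots,i+1$, and $n-i-2, \dots, 1, 0$ in the remaining positions; reducing the leading entry modulo $p$ to $n-1-i$ fills the unique gap at $n-i-1$, so the residues are again exactly $\{0,1,\dots,n-1\}$ modulo $p$. Hence (B2$'$) holds for every $\lambda^i$ against $\lambda^0$. A direct check gives $d(\lambda^i)=0$ for every $i$ as well (each $\lambda^i$ has some consecutive difference---for instance the first one, or a difference between two equal parts---that is not $\equiv -1 \pmod p$), so (B1) holds and all $p$-hooks lie in the block.

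Finally, the converse, which I expect to be the crux. Suppose $\mu \vdash p$ satisfies the residue condition; I would work with the strictly decreasing ``beta-set'' $b_j = \mu_j + (n-j)$ and compare it residue class by residue class with the beta-set $\{0,1,\dots,n-1\}$ of the empty partition. Matching residues forces each class to contain the same number of entries as for $\rho$; within a class the $m_c$ entries of $\mu+\rho$ are distinct nonnegative integers $\equiv c \pmod p$, so their sum exceeds that of the minimal configuration $\{c, c+p, \dots\}$ by a nonnegative multiple of $p$. Since the total excess is $\sum_j b_j - \binom{n}{2} = \sum_j \mu_j = p$, exactly one class can differ, and there only by sliding its single top entry up by one step of size $p$. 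Reconstructing $\mu$ from this forced beta-set returns precisely $\lambda^i$ for some $0 \le i \le p-1$.

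The main obstacle is this last rigidity statement---that matching residues together with $|\mu| = p$ leaves room for only one $p$-step displacement---which is the combinatorial heart of the argument (equivalently, that a partition of $p$ with empty $p$-core is a single $p$-rim hook, hence a $p$-hook). Once it is in hand, the two inclusions combine to identify the block of $\lambda^0$ with exactly $\{\lambda^0, \lambda^1, \dots, \lambda^{p-1}\}$, as claimed.
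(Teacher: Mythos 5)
Your proof is correct, and it rests on the same foundation as the paper's---Donkin's criterion, reduced to a statement about the residues of $\mu+\rho$ modulo $p$---but it diverges from the paper exactly where the work is hardest, namely the converse inclusion. The paper first reduces to $n=p$ (citing that the block is unchanged for larger $n$), so that $\lambda^0+\rho$ has the $p$ pairwise distinct residues $p-1,\dots,1,0$ and (B$2'$) becomes ``the residues of $\mu+\rho$ are pairwise distinct''; it verifies this for hooks by exhibiting explicit reflections $\lambda^i+\rho \equiv s_\alpha(\lambda^{i+1}+\rho)$, and for the converse it runs a case analysis on $\mu_2 = 2, 3, \dots$, locating two coincident residues, which it only sketches (``the argument continues in this way''). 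You instead keep $n\ge p$ general, phrase (B$2'$) as equality of residue multisets, and settle the converse by the beta-set excess count: each residue class of $\mu+\rho$ contributes a nonnegative multiple of $p$ beyond the minimal configuration realized by $\rho$, the total excess is $\sum_j \mu_j = p$, so exactly one class is displaced, and only by moving its top entry up a single step of size $p$---this is precisely the classical fact that a partition of $p$ with empty $p$-core is a single $p$-rim hook. Your route buys a complete and uniform proof of the converse (the paper leaves its induction on $\mu_2$ to the reader) and avoids invoking the stability of blocks under enlarging $n$; the paper's route buys brevity and very concrete low-tech verifications. The one step you should still write out is the final reconstruction: from the forced beta-set $\bigl(\{0,\dots,n-1\}\setminus\{x\}\bigr)\cup\{x+p\}$, where $x = c+(m_c-1)p$ is the top entry of the displaced class, one computes $\mu_1 = x+p-n+1$, $\mu_j = 1$ for $2 \le j \le n-x$, and $\mu_j = 0$ thereafter, i.e.\ $\mu = \lambda^{\,n-1-x}$ with $0 \le n-1-x \le p-1$; this is routine, but it is the point where the hook shape actually emerges, so it should not be left as an assertion.
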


\begin{proof}
  We may assume without loss of generality that $n=p$ since the block
  is the same for larger $n$.  We have $d = d(\lambda^i) = 0$ for all
  $0 \le i \le p-1$. Set $\lambda = \lambda^0$. The modulo $p$
  residues of the parts of $\lambda+\rho$ in order are $p-1, p-2,
  \dots, 1, 0$. Thus, in order that a partition $\mu \vdash p$ satisfy
  condition (B$2'$) in relation to $\lambda$, it is necessary and
  sufficient that the modulo $p$ residues of $\mu+\rho$ are pairwise
  distinct.

  It is easy to check that this condition holds true for all the
  $p$-hook partitions $\lambda^i$. In fact, one checks by direct
  calculation that $\lambda^i + \rho \equiv s_\alpha(\lambda^{i+1} +
  \rho)$ for all $0 \le i < p-1$, where $\alpha = \varepsilon_1 -
  \varepsilon_{i+1}$. This shows that the block in question contains
  at least all the $p$-hooks. 

  To finish, we need to show that it contains no other
  partition. Suppose that $\mu \vdash p$ is not a
  $p$-hook. Equivalently, $\mu_2 \ge 2$. This forces $\mu_1 \ge 2$ and
  $\mu_{p-1} = \mu_p = 0$ as well. If $\mu_2 = 2$ then $\mu$ cannot
  satisfy the criterion in the first paragraph since the modulo $p$
  residue of $\mu+\rho$ in place 2 is zero and matches the modulo $p$
  residue of $\mu+\rho$ in place $p$. If $\mu_2 =3$ then the last
  three parts of $\mu$ must be zero and  the modulo $p$
  residue of $\mu+\rho$ in place 2 is $1$ and matches the modulo $p$
  residue of $\mu+\rho$ in place $p-1$. The argument continues in this
  way.
\end{proof}

\begin{thm*} 
  $\Delta(\lambda^i)$ has two composition factors $L(\lambda^i)$,
  $L(\lambda^{i+1})$ for all $0\le i < \min(n-1,p-1)$. If $i =
  \min(n-1,p-1)$ then $\Delta(\lambda^i)$ is simple.
\end{thm*}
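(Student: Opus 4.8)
The plan is to match the lower bound of Lemma~\ref{lem:A} with an upper bound on the number of composition factors, the upper bound coming from the block result (Lemma~\ref{lem:B}) together with the Schur functor comparison with $\Sigma_p$. By Lemma~\ref{lem:A}, $L(\lambda^i)$ and $L(\lambda^{i+1})$ are already among the composition factors of $\Delta(\lambda^i)$ whenever $0\le i<\min(n-1,p-1)$, so it suffices to bound the number of composition factors from above: by two when $0\le i<\min(n-1,p-1)$, and by one when $i=\min(n-1,p-1)$ (which forces simplicity, as $\Delta(\lambda^i)$ always has simple head $L(\lambda^i)$).

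First I would treat $n\ge p$, where Lemma~\ref{lem:B} applies directly. Every composition factor of $\Delta(\lambda^i)$ lies in the block of $L(\lambda^i)$, which by Lemma~\ref{lem:B} is the block of $\lambda^0$ and consists of exactly the $p$-hooks; hence each factor is some $L(\lambda^j)$. Highest weight theory forces $\lambda^j\unlhd\lambda^i$, and since $\lambda^0\rhd\lambda^1\rhd\cdots\rhd\lambda^{p-1}$ this means $i\le j\le p-1$, with $L(\lambda^i)$ occurring exactly once. I would then apply the exact Schur functor $e(-)$, using $e\Delta(\lambda^i)\simeq S^{(p-i,1^i)}$ from \eqref{f-effect-std}. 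Inspecting the differences $\lambda^j_t-\lambda^j_{t+1}$ shows that $\lambda^0=(p)$ is the unique $p$-hook that is \emph{not} column $p$-regular, while $\lambda^1,\dots,\lambda^{p-1}$ all are; so $eL(\lambda^0)=0$ whereas $eL(\lambda^j)=D_{\lambda^j}\ne0$ for $j\ge1$ by \eqref{f-effect-L}. As $e(-)$ is exact and the $D_{\lambda^j}$ are pairwise non-isomorphic simple modules, the number of composition factors of $S^{(p-i,1^i)}$ equals the number of composition factors $L(\lambda^j)$ of $\Delta(\lambda^i)$ with $j\ge1$. Peel's theorem on hook Specht modules \cite{Peel} (with $p\mid p$) supplies these counts: $S^{(p-i,1^i)}$ has two composition factors for $1\le i\le p-2$ and is irreducible for $i=0$ and $i=p-1$. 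Reading this back: for $1\le i\le p-2$ every factor already has $j\ge1$, so $\Delta(\lambda^i)$ has exactly two; for $i=0$ the single factor of $S^{(p)}$ gives one factor with $j\ge1$, to which the $e$-annihilated factor $L(\lambda^0)$ must be restored, again yielding two; and for $i=p-1$ the only admissible factor is $L(\lambda^{p-1})$, matching the single factor of $S^{(1^p)}$, so $\Delta(\lambda^{p-1})$ is simple. Since $\min(n-1,p-1)=p-1$ here, this is the assertion.

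For $n<p$, where $\min(n-1,p-1)=n-1$ and Lemma~\ref{lem:B} no longer applies, I would descend from the case $n=p$ just established. The set $\pi$ of partitions of $p$ of length at most $n$ is a coideal for the dominance order on partitions of $p$: if $\mu\unrhd\lambda$ then $\ell(\mu)\le\ell(\lambda)$, since the $k$-th partial-sum inequality at $k=\ell(\lambda)$ forces $\mu$ to vanish beyond row $\ell(\lambda)$. Hence idempotent truncation yields an exact quotient functor from $S(p,p)$-mod to $S(n,p)$-mod sending $\Delta_p(\lambda^i)$ to $\Delta_n(\lambda^i)$ and each $L_p(\lambda^j)$ to $L_n(\lambda^j)$ when $\ell(\lambda^j)=j+1\le n$, and to $0$ otherwise. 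Applying it to the composition series found above, $L_p(\lambda^j)$ survives precisely when $j\le n-1$: for $0\le i<n-1$ both $L_p(\lambda^i)$ and $L_p(\lambda^{i+1})$ survive and give $L_n(\lambda^i),L_n(\lambda^{i+1})$, while for $i=n-1$ the factor $L_p(\lambda^n)$ is truncated away, leaving $\Delta_n(\lambda^{n-1})$ simple.

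The crux is the upper bound, and within it the accounting forced by $\lambda^0$, the one $p$-hook killed by the Schur functor. Confining the factors to the $p$-hook block is exactly what Lemma~\ref{lem:B} delivers, and transporting Peel's count across the exact functor $e(-)$ is routine \emph{except} at $i=0$, where one must remember to reinstate the invisible factor $L(\lambda^0)$; everything else---the dominance chain, the column $p$-regularity check, and the truncation for $n<p$---is direct verification.
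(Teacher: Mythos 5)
Your argument is correct, and its skeleton is the paper's own: Lemma \ref{lem:A} supplies the lower bound; Lemma \ref{lem:B} together with highest-weight (dominance) considerations confines every composition factor of $\Delta(\lambda^i)$ to the chain $L(\lambda^j)$, $j\ge i$; the exact Schur functor and Peel's theorem cap the count; and the case $n<p$ is deduced from $n=p$ by idempotent truncation, exactly as in \cite[\S6.5]{Green}. Where you genuinely depart from the paper is at the edge cases. The paper runs the Schur-functor count only for $p>2$ and $i>0$ (precisely so that no composition factor is killed by $e$), and then settles $i=0$ and $p=2$ by a separate appeal to the main result of \cite{Doty}, applied to $\nabla(\lambda^0)$ viewed as a symmetric power of the natural module. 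You instead keep $i=0$ inside the same count by observing that $L(\lambda^0)$ is the unique $p$-hook simple annihilated by $e$ and occurs exactly once in $\Delta(\lambda^0)$ (being its highest weight), so the total number of factors is one more than the number of composition factors of $S^{(p)}$, i.e.\ two; this makes the proof self-contained at those points, a small but real improvement over the paper. Two caveats, neither fatal: first, at $p=2$ your citation of Peel is technically outside its hypotheses (Peel assumes $p$ odd), but nothing is needed there beyond the observation that $S^{(2)}$ and $S^{(1,1)}$ are one-dimensional, the middle range $1\le i\le p-2$ being empty; second, you use the vanishing $eL(\lambda)=0$ for $\lambda$ not column $p$-regular, which the paper never states (it records only \eqref{f-effect-L}) --- this is standard Schur-functor theory from \cite[Ch.~6]{Green}, or can be seen directly here since $L(\lambda^0)$ is a Frobenius twist of the natural module and hence has zero $(1^p)$-weight space.
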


\begin{proof} First we assume that $n \ge p$. 
  We apply the Schur functor to $\Delta(\lambda^i)$.  By
  \eqref{f-effect-std} we have $e\Delta(\lambda^i) \simeq
  S_{\lambda^i}$. By a theorem of Peel \cite{Peel} (see also
  \cite[Theorem~24.1]{James}), for $p>2$ the dual Specht modules
  labeled by $p$-hooks $\lambda^i$ have at most two composition
  factors. More precisely, $S_{\lambda^i}$ has exactly two composition
  factors if $0<i<p-1$ and just one if $i=0$ or $p-1$.

  By Lemma \ref{lem:B} and the isomorphism \eqref{f-effect-L}, it
  follows that for $0<i<p-1$ no composition factor of
  $\Delta(\lambda^i)$ is killed by the Schur functor. Thus it follows
  from Peel's result that the $\Delta(\lambda^i)$ must have at most
  two composition factors when $p>2$. Combining this with Lemma
  \ref{lem:A}, the Theorem follows for all $0 < i \le p-1$ in case
  $p>2$.

  In case $p=2$ or $i=0$ the result is easy to prove directly. For
  instance, one may apply the main result of \cite{Doty} to
  $\nabla(\lambda^0)$ since this is isomorphic with a symmetric power
  of the natural module.

  Now that the result has been established in case $n \ge p$, we
  consider the case $n<p$. In this case there is an idempotent Schur
  functor sending $S(p,p)$-mod to $S(n,p)$-mod. By the results in
  \cite[\S6.5]{Green} we obtain the result in general. 
\end{proof}

\begin{rmk}
  The Theorem also follows from the character formula of
  \cite{BKS}. Take $0 \le i \le \min(n,p-1)$ as in the Theorem. The
  dimension of the $\mu$ weight space of $\Delta(\lambda^i)$ is equal
  to the number of standard $\lambda^i$-tableaux of weight $\mu$. The
  latter set is the disjoint union of the set of standard
  $\lambda^i$-tableaux with no `bad' $p$-hook (in the sense of \S6 of
  \cite{BKS}) and the image of the set of standard
  $\lambda^{i+1}$-tableaux of weight $\mu$ with no bad $p$-hook under
  the map sending a tableau of shape $\lambda^{i+1}$ to one of shape
  $\lambda^i$ by moving its bottom node up to the end of its top
  row. This shows that $\ch \Delta(\lambda^i) = \ch L(\lambda^i) + \ch
  L(\lambda^{i+1})$, as desired.
\end{rmk}

We now give some consequences of the Theorem.

\begin{cor} \label{cor:A}
  We have $\ch L(\lambda^i) = \sum_{j \ge i} (-1)^{j-i} \ch
  \Delta(\lambda^j)$ for all $0\le i \le \min(n-1,p-1)$.
\end{cor}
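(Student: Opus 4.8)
The plan is to read the Corollary off as the triangular inversion of the character identities supplied by the Theorem. Writing $m = \min(n-1,p-1)$, the Theorem gives
\[
\ch \Delta(\lambda^j) = \ch L(\lambda^j) + \ch L(\lambda^{j+1}) \quad (0 \le j < m), \qquad \ch \Delta(\lambda^m) = \ch L(\lambda^m).
\]
With respect to the ordering $\lambda^0, \dots, \lambda^m$ this is a unipotent upper-triangular change of basis expressing the $\ch \Delta(\lambda^j)$ in terms of the $\ch L(\lambda^j)$, so I would simply invert it to obtain each $\ch L(\lambda^i)$ as an alternating sum of the $\ch \Delta(\lambda^j)$ with $j \ge i$.

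Concretely, I would argue by downward induction on $i$, starting at $i = m$. The base case is immediate: the second displayed identity gives $\ch L(\lambda^m) = \ch \Delta(\lambda^m)$, which is the asserted formula since $\sum_{j \ge m}(-1)^{j-m}\ch \Delta(\lambda^j)$ reduces to its single term $\ch \Delta(\lambda^m)$. For the inductive step, fix $i$ with $0 \le i < m$ and assume the formula for $i+1$. The first identity rearranges to $\ch L(\lambda^i) = \ch \Delta(\lambda^i) - \ch L(\lambda^{i+1})$; substituting the inductive hypothesis and using $-(-1)^{j-(i+1)} = (-1)^{j-i}$ telescopes the expression into
\[
\ch L(\lambda^i) = \ch \Delta(\lambda^i) + \sum_{j \ge i+1}(-1)^{j-i}\ch \Delta(\lambda^j) = \sum_{j \ge i}(-1)^{j-i}\ch \Delta(\lambda^j),
\]
which completes the induction.

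I do not expect any genuine obstacle: granted the Theorem, the Corollary is a formal inversion, and the alternating signs appear precisely because each $\Delta(\lambda^j)$ carries exactly one extra simple constituent $L(\lambda^{j+1})$ beyond $L(\lambda^j)$. The only point deserving a word of care is the termination of the sum. One should note that $\sum_{j \ge i}$ is the finite sum $\sum_{j=i}^{m}$: the partition $\lambda^j = (p-j, 1^j)$ has length $j+1$, so it is a legitimate highest weight for $S(n,p)$ only when $j \le \min(n-1,p-1) = m$. Equivalently, $\Delta(\lambda^m)$ is already simple and there is no $L(\lambda^{m+1})$ to subtract, which is exactly what makes the finite alternating sum well defined and halts the downward induction at the correct place.
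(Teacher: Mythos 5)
Your proof is correct and follows exactly the paper's route: the paper also proves the Corollary by downward induction on $i$ starting from the base case $i = \min(n-1,p-1)$, where $\Delta(\lambda^i)$ is simple, and working backwards using the two-factor structure of $\Delta(\lambda^j)$ given by the Theorem. Your additional remark that the sum terminates at $j = m$ because $\lambda^{m+1}$ is not a valid highest weight is a worthwhile clarification of a point the paper leaves implicit.
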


\begin{proof}
This follows immediately from the Theorem, by induction on $i$
starting with the base case $i = \min(n-1,p-1)$ and working backwards
in $i$.
\end{proof}

\begin{cor}\label{cor:B}
For all $\mu \vdash p$ with at most $n$ parts and all $0\le i \le
\min(n-1,p-1)$ we have:
\[
\Ext^1_{\GL_n(k)}(L(\lambda^i), L(\mu)) \simeq k
\] 
in case: 
\begin{enumerate}
\item[(i)] $0<i<\min(n-1,p-1)$ and $\mu = \lambda^{i+1}$ or
$\lambda^{i-1}$;

\item[(ii)] $i=0$ and $\mu = \lambda^1$;

\item[(iii)] $i=\min(n-1,p-1)$ and $\mu = \lambda^{i-1}$. 
\end{enumerate}
For all other cases $\Ext^1_{\GL_n(k)}(L(\lambda^i), L(\mu)) = 0$.
\end{cor}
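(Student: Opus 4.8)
The plan is to reduce everything to the Schur algebra $S(n,p)$ and then extract the extension groups from the submodule structure of the Weyl and dual Weyl modules supplied by the Theorem. Since $L(\lambda^i)$ and $L(\mu)$ are both homogeneous polynomial modules of degree $p$, and homogeneous polynomial modules of a fixed degree form a full subcategory closed under extensions, their extension groups over $\GL_n(k)$ agree with those computed in $S(n,p)\text{-mod}$, where the standard machinery of a highest weight category is available. Throughout I write $L_a = L(\lambda^a)$, $\Delta_a = \Delta(\lambda^a)$, $\nabla_a = \nabla(\lambda^a)$, and $m = \min(n-1,p-1)$.

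First I dispose of the partitions $\mu$ that are not $p$-hooks. For $n \ge p$, Lemma \ref{lem:B} places every non-$p$-hook $\mu \vdash p$ in a block different from that of the $\lambda^i$, so $\Ext^1(L_i, L(\mu)) = 0$ with no further work; the case $n < p$ follows because the relevant block is the image of the $n = p$ block under the truncation functor of \cite[\S6.5]{Green}, which respects the block decomposition. Hence only $\mu = \lambda^j$ requires genuine computation.

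The heart of the argument is to show $\dim \Ext^1(L_i, L_j) = 1$ when $|i-j| = 1$ and $0$ otherwise. From the Theorem I have the short exact sequences $0 \to L_{i+1} \to \Delta_i \to L_i \to 0$ and, applying the duality $\tau$ (which fixes each simple), $0 \to L_j \to \nabla_j \to L_{j+1} \to 0$ for $0 \le i,j < m$, together with $\Delta_m = \nabla_m = L_m$. I apply $\Hom(L_i, -)$ to the $\nabla_j$ sequence and read off the resulting long exact sequence, using the highest-weight-category facts $\Hom(L_i,\nabla_j) = \delta_{ij}\,k$ and the orthogonality $\Ext^s(\Delta_a,\nabla_b) = \delta_{s0}\delta_{ab}\,k$. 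An auxiliary long exact sequence, obtained by applying $\Hom(-,\nabla_j)$ to the $\Delta_i$ sequence, gives $\Ext^1(L_i,\nabla_j) = k$ exactly when $i = j-1$. Feeding this back in, the long exact sequence collapses to $0 \to \delta_{i,j+1}\,k \to \Ext^1(L_i,L_j) \to \delta_{i,j-1}\,k \to \cdots$, and the duality $\Ext^1(L_i,L_j) \cong \Ext^1(L_j,L_i)$, again coming from $\tau$, resolves the single remaining connecting map and yields the claimed values; the endpoint $j = m$ is handled directly, since there $\nabla_m = L_m$ and the self-orthogonality gives $\Ext^1(L_m,L_m) = 0$.

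Finally I translate the condition $|i-j| = 1$ into the three listed cases, keeping track of the range $0 \le j \le m$: for $0 < i < m$ both neighbours $j = i \pm 1$ occur (case (i)), while the boundary indices $i = 0$ and $i = m$ each admit a single neighbour (cases (ii) and (iii)), and every remaining $\mu$ gives $0$. I expect the main obstacle to be the bookkeeping in the collapsing long exact sequence of the third step—in particular, pinning down the connecting homomorphism $\Ext^1(L_i,\nabla_j) \to \Ext^1(L_i,L_{j+1})$—which is precisely where invoking the self-duality of the block to identify $\Ext^1(L_i,L_j)$ with $\Ext^1(L_j,L_i)$ removes the need to compute that map head-on.
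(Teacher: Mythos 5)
Your proposal is correct and follows essentially the same route as the paper: reduce to the Schur algebra $S(n,p)$, dispose of the non-hook weights $\mu$ via the block description of Lemma \ref{lem:B}, and then compute the remaining $\Ext^1$ groups among the hooks from the Theorem's description of the Weyl modules. The only real difference is that where the paper simply cites the standard fact \cite[Part II, (2.14)(4)]{Jantzen} relating $\Ext^1$ between simples to the known radical/socle structure of $\Delta(\lambda^i)$ and $\nabla(\lambda^j)$, you rederive the needed values by hand from the long exact sequences, the $\Delta$/$\nabla$-orthogonality, and the $\tau$-duality symmetry of $\Ext^1$ between simples.
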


\begin{proof}
It is known that for partitions $\lambda \vdash r$, $\mu \vdash r$ we
have isomorphisms (for any $r$)
\[
\Ext^1_{\GL_n(k)}(L(\lambda), L(\mu)) \simeq 
\Ext^1_{\M_n(k)}(L(\lambda), L(\mu)) \simeq 
\Ext^1_{S(n,r)}(L(\lambda), L(\mu)) .
\]
The result now follows from the Theorem by Lemma 2 and \cite[Part II,
  (2.14)(4)]{Jantzen}).
\end{proof}

Given a partition $\lambda$ of length at most $n$ we denote by
$P(\lambda)$ (respectively, $I(\lambda)$) the projective hull
(respectively, injective envelope) of $L(\lambda)$ in the category
$\M_n(k)$-mod.  We have $P(\lambda) \simeq {}^\tau I(\lambda)$.
Recalling the Mullineux map \cite{Mull}, by \cite[Lemma 3.3]{DeDo} we
have that
\begin{equation} \label{Mull}
I(\lambda) = T(\mull(\lambda')) \text{ if $\lambda$ is column
$p$-regular.}
\end{equation}
Moreover, in that case $\mull(\lambda')$ is again a partition of
length at most $n$. We recall from \cite[(2.2h)]{Donkin:SA1}
that $I(\lambda)$ has a $\nabla$-filtration and for all partitions
$\lambda, \mu$ of length $\le n$ the number $(I(\lambda):
\nabla(\mu))$ of subquotients in the filtration isomorphic with
$\nabla(\lambda)$ satisfies the reciprocity law
\begin{equation} \label{reciprocity}
  (I(\lambda): \nabla(\mu)) = [\nabla(\mu): L(\lambda)]
\end{equation}
where the number on the right-hand side stands for the composition
factor multiplicity of $L(\lambda)$ in a composition series of
$\nabla(\mu)$. Another corollary of our main result is the following.

\begin{cor}\label{cor:C}
  Suppose that $p>2$.  The module structure of $P(\lambda^i) =
  I(\lambda^i) = T(\lambda^{i-1})$ is as follows, for all $1 \le i \le
  \min(n-1,p-1)$ (the left diagram is for the case $i < \min(n-1,p-1)$
  and the right one for $i = \min(n-1,p-1)$):
\[
\begin{tabular}{cc}
\framebox[42mm]{
\begin{minipage}{42mm}
\def\objectstyle{\scriptstyle}
\xymatrix@=6pt{
  & L(\lambda^i) \ar@{-}[dl] \ar@{-}[dr] & \\
L(\lambda^{i+1}) \ar@{-}[dr] & & L(\lambda^{i-1}) \ar@{-}[dl] \\
  & L(\lambda^i)  &
}\end{minipage}
}
\quad
&
\quad
\framebox[22mm]{
\begin{minipage}{22mm}
\def\objectstyle{\scriptstyle}
\xymatrix@=6pt{
  & L(\lambda^{p-1}) \ar@{-}[d] & \\
  & L(\lambda^{p-2}) \ar@{-}[d] & \\
  & L(\lambda^{p-1}) &
}\end{minipage}
}
\end{tabular}
\]
where the module diagram is interpreted as described in \cite{alp}.
For $p=2$ the module structure of $P(\lambda^1) = I(\lambda^1) =
T(\lambda^{0})$ is depicted in the right diagram above.
\end{cor}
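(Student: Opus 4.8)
The plan is to read off $I(\lambda^i)$ from its $\nabla$-filtration and then fix the Loewy layers using the $\Ext^1$-information in Corollary~\ref{cor:B}. Write $m=\min(n-1,p-1)$ throughout. First I would record the structure of the modules $\nabla(\lambda^j)$: since $\tau$ fixes each simple module and $\Delta(\lambda^j)={}^\tau\nabla(\lambda^j)$, the Theorem gives that $\nabla(\lambda^j)$ has composition factors $L(\lambda^j),L(\lambda^{j+1})$ for $0\le j<m$ and is simple for $j=m$. As $\nabla(\lambda^j)$ has simple socle $L(\lambda^j)$, in the two-factor case it is uniserial with $L(\lambda^{j+1})$ on top of $L(\lambda^j)$.

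Next I would establish the chain of identifications. Each $\lambda^i$ satisfies $\lambda^i_k-\lambda^i_{k+1}<p$, hence is column $p$-regular, so by \eqref{Mull} the module $I(\lambda^i)$ is the indecomposable tilting module $T(\mull((\lambda^i)'))$, of highest weight $\mull((\lambda^i)')$. On the other hand reciprocity \eqref{reciprocity} gives $(I(\lambda^i):\nabla(\mu))=[\nabla(\mu):L(\lambda^i)]$, which by the previous paragraph is nonzero only for $\mu=\lambda^{i-1}$ and $\mu=\lambda^i$, each with multiplicity one. Thus $I(\lambda^i)$ has a $\nabla$-filtration with sections $\nabla(\lambda^{i-1})$ and $\nabla(\lambda^i)$; since $\lambda^{i-1}>\lambda^i$ in the dominance order its highest weight is $\lambda^{i-1}$, forcing $\mull((\lambda^i)')=\lambda^{i-1}$ and $I(\lambda^i)=T(\lambda^{i-1})$. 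Finally $P(\lambda^i)={}^\tau I(\lambda^i)={}^\tau T(\lambda^{i-1})=T(\lambda^{i-1})$ because tilting modules are $\tau$-self-dual, which yields the asserted equalities $P(\lambda^i)=I(\lambda^i)=T(\lambda^{i-1})$.

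Reading the two sections off gives the composition factors: $L(\lambda^{i-1}),L(\lambda^i),L(\lambda^i),L(\lambda^{i+1})$ when $i<m$, and $L(\lambda^{m-1}),L(\lambda^m),L(\lambda^m)$ when $i=m$. The hard part is the Loewy structure, since a priori the four factors in the case $i<m$ might be strung out uniserially. I would resolve this by regarding the module as the projective cover $P(\lambda^i)$, whose top is $L(\lambda^i)$ and whose second radical layer $\operatorname{rad}P(\lambda^i)/\operatorname{rad}^2P(\lambda^i)$ is semisimple with $[\,\cdot:L(\mu)]=\dim\Ext^1(L(\lambda^i),L(\mu))$, a standard property of projective covers. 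By Corollary~\ref{cor:B} this $\Ext^1$ vanishes except for $\mu=\lambda^{i\pm1}$ (where it equals $k$), and in particular $\Ext^1(L(\lambda^i),L(\lambda^i))=0$; hence the second layer is $L(\lambda^{i-1})\oplus L(\lambda^{i+1})$ when $i<m$ and $L(\lambda^{m-1})$ when $i=m$, and the single remaining factor $L(\lambda^i)$ must be the socle. This produces exactly the diamond diagram for $i<m$ and the uniserial diagram for $i=m$. The case $p=2$ is simply the instance $m=1$, $i=1$ of the uniserial picture, so the same argument applies once Corollary~\ref{cor:B} is invoked there.
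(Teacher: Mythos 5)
Your overall strategy is the paper's own: determine the $\nabla$-filtration of $I(\lambda^i)$ by reciprocity, then pin down the Loewy layers using Corollary \ref{cor:B}. But there is one genuine gap, in the reciprocity step. You claim that $(I(\lambda^i):\nabla(\mu)) = [\nabla(\mu):L(\lambda^i)]$ is ``nonzero only for $\mu=\lambda^{i-1}$ and $\mu=\lambda^i$ \ldots\ by the previous paragraph,'' but your previous paragraph only describes the composition factors of $\nabla(\lambda^j)$ for the hook partitions $\lambda^j$. The reciprocity count runs over \emph{all} partitions $\mu \vdash p$ of length $\le n$, and the Theorem says nothing about $[\nabla(\mu):L(\lambda^i)]$ when $\mu$ is not a hook; dominance considerations alone do not exclude such $\mu$ (for example, with $p=5$ and $i=3$, the non-hooks $(3,2)$ and $(2,2,1)$ both dominate $\lambda^3=(2,1,1,1)$, so a priori $\nabla((3,2))$ could contain $L(\lambda^3)$). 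What rules these out is the block structure: if $L(\lambda^i)$ were a composition factor of $\nabla(\mu)$, then $\mu$ would lie in the block of the hooks, hence would itself be a hook by Lemma \ref{lem:B}. This is exactly why the paper's proof cites Lemma \ref{lem:B} alongside \eqref{reciprocity} and the Theorem; your argument never invokes it, and without it the $\nabla$-filtration of $I(\lambda^i)$ is not determined.

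Once that citation is added, the rest of your argument is correct and matches the paper's, in places more carefully. The paper simply asserts that $I(\lambda^i)$ is contravariantly self-dual, deduces $I(\lambda^i)=P(\lambda^i)$, and only then concludes it is tilting; you instead get tilting immediately from \eqref{Mull} (since $\lambda^i$ is column $p$-regular for $i \ge 1$) and obtain $P(\lambda^i)={}^\tau I(\lambda^i) \simeq I(\lambda^i)$ from the $\tau$-self-duality of indecomposable tilting modules, recovering $\mull((\lambda^i)')=\lambda^{i-1}$ as a byproduct (the paper records this only afterwards, in its first Remark). Note also that your filtration, with sections $\nabla(\lambda^{i-1})$ and $\nabla(\lambda^i)$, is the correct one: the paper's text names $\nabla(\lambda^i)$ and $\nabla(\lambda^{i+1})$, a misprint inconsistent with the highest weight being $\lambda^{i-1}$. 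Your radical-layer computation via $\Ext^1$ and the identification of $\operatorname{rad}^2$ with the simple socle is a sound fleshing-out of the paper's one-line appeal to Corollary \ref{cor:B}, and treating $p=2$ as the case $i=m=1$ of the uniserial picture is fine, since the Theorem and Corollary \ref{cor:B} hold for $p=2$.
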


\begin{proof}
From Lemma \ref{lem:B}, the reciprocity law \eqref{reciprocity}, and
the Theorem it follows that for each $i$, $1 \le i \le p-1$, the
module $I(\lambda^i)$ has a $\nabla$-filtration with subquotients
$\nabla(\lambda^i)$ and $\nabla(\lambda^{i+1})$, each occurring with
multiplicity one. From this and Corollary \ref{cor:B} it follows that
the module structure of $I(\lambda^i)$ must be as described in all
cases.

Since the module $I(\lambda^i)$ is contravariantly self-dual, i.e.,
$I(\lambda^i) \simeq {}^\tau I(\lambda^i)$, it follows immediately
that $I(\lambda^i) = P(\lambda^i)$. Hence it follows that
$I(\lambda^i) = P(\lambda^i)$ is a tilting module. Its highest weight
is $\lambda^{i-1}$, so we conclude that $I(\lambda^i) = P(\lambda^i) =
T(\lambda^{i-1})$. This completes the proof.
\end{proof}

\begin{rmks}
1. By the equality \eqref{Mull} it follows from the last corollary
  that $\mull((\lambda^i)') = \lambda^{i-1}$. This may also be checked
  combinatorially.

2. From \eqref{reciprocity} it also follows that $I(\lambda^0) \simeq
   \nabla(\lambda^0)$ and thus that $P(\lambda^0) \simeq
   \Delta(\lambda^0)$. These modules are not tilting. 
\end{rmks}

\section{Comparison with symmetric groups}\label{sec:symm}\noindent
It will be illuminating to compare our results with well known results
concerning blocks of cyclic defect group for symmetric groups.  We are
concerned with the group $\Sigma_p$ in characteristic $p$, which has
just one block $\B=k\Sigma_p$.  

For convenience we assume first that $p>2$.  We label the simple
$\B$-modules by the column $p$-regular partitions $\lambda^i$ for
$1\le i \le p-1$ and denote them by $D_i=D_{\lambda^i}$.  (The reader
who prefers to label by $p$-regular partitions should use the
isomorphism $D_{\lambda^i} \simeq D^{\lambda^{i-1}}$.)  As follows
from \cite{Peel}, the Brauer tree of $\B$ is an open polygon with no
exceptional vertex, as depicted in the figure below.
\[
\def\objectstyle{\scriptstyle}
\xymatrix@R=12pt{
  {\bullet} \ar@{-}[r]^{D_1} & 
  {\bullet} \ar@{-}[r]^{D_2} & 
  {\bullet} \ar@{.}[r] & 
  {\bullet} \ar@{-}[r]^{D_{p-1}} & {\bullet}
}
\]
The edges of the tree are in one-one correspondence with the simple
$\B$-modules; the simple modules appearing in order as
$D_i=D_{\lambda^i}$ for $1\le i \le p-1$. (note $D_{p-1}=D_{(1^p)}$),
see \cite[\S2.2]{SM}.  Let $P_i$ be the projective cover of $D_i$.
Then the first diagram below gives the module structure of $P_1$, the
second is $P_i$ (for all $2 \le i \le p-2$) and the third is
$P_{p-1}$:
\[
\begin{tabular}{cc}
\framebox[22mm]{
\begin{minipage}{22mm}
\def\objectstyle{\scriptstyle}
\xymatrix@=6pt{
  & D_1 \ar@{-}[d] & \\
  & D_2 \ar@{-}[d] & \\
  & D_1 &
}\end{minipage}
}
\quad
\framebox[42mm]{
\begin{minipage}{42mm}
\def\objectstyle{\scriptstyle}
\xymatrix@=6pt{
  & D_i \ar@{-}[dl] \ar@{-}[dr] & \\
D_{i+1} \ar@{-}[dr] & & D_{i-1} \ar@{-}[dl] \\
  & D_i  &
}\end{minipage}
}
\quad
\framebox[22mm]{
\begin{minipage}{22mm}
\def\objectstyle{\scriptstyle}
\xymatrix@=6pt{
  & D_{p-1} \ar@{-}[d] & \\
  & D_{p-2} \ar@{-}[d] & \\
  & D_{p-1} &
}\end{minipage}
}
\end{tabular}
\]
where again the diagrams are to be interpreted as in \cite{alp}.  

For $p=2$ there is just one simple (namely, the trivial module) and
its projective cover is uniserial of length 2.

Assume that $n \ge p$. Let us denote by $\HH$ the block of $\M_n(k)$
containing the simple modules labeled by the $p$-hooks $\lambda^i$
for $0\le i \le p-1$.  Comparing the results in Corollary \ref{cor:C}
with the description of the block $\B$ in this section, we see that
$\HH$ and $\B$ are nearly equivalent, in the sense that $\HH$ contains
one more simple (and thus one more projective) than does $\B$ and upon
deleting all references to the offending simple module (and its
projective) from $\HH$ we recover $\B$. This deletion procedure is
precisely the effect of the Schur functor applied to $\HH$.


\end{document}